
\documentclass[11 pt]{amsart}
\usepackage{graphicx}
\usepackage[hidelinks]{hyperref}
\usepackage{color}
\usepackage{amssymb,amsmath,amsfonts,latexsym}
\usepackage{epstopdf}
\usepackage{nicefrac}
\usepackage{mathrsfs}
\usepackage{amsthm}
\usepackage{hyperref}
\hypersetup{
     colorlinks   = true,
     citecolor    = blue
}

\usepackage{tikz}

\usepackage{tikz,fullpage}
\usetikzlibrary{patterns}
\usepackage{enumerate}
\setcounter{section}{-1}
\usepackage{ulem}
\usepackage{float, lscape}
\numberwithin{equation}{section}
\usepackage[margin=1.27in]{geometry}
\usetikzlibrary{arrows,%
                petri,%
                topaths}%
\usepackage{tkz-berge}
\usepackage[position=top]{subfig}
\usetikzlibrary{%
  matrix,%
  calc,%
  arrows%
}
\DeclareGraphicsRule{.tif}{png}{.png}{`convert #1 `dirname #1`/`basename #1 .tif`.png}

\parskip = 5.5pt
\parindent = 0.0in

\hoffset=-.00001in
\voffset=-.6in
\setlength{\textwidth}{6in}
\setlength{\textheight}{9in}

\newtheorem{thm}{THEOREM}[section]

\newtheorem{lemma}[thm]{LEMMA}


\newcommand{\G}{\Gamma}






\newtheorem{definition}{Definition}









\begin{document}
\title{Non-commutative Poisson Structures on quantum torus orbifolds}
\author{Safdar Quddus}

\date{\today}
 
\let\thefootnote\relax\footnote{2010 Mathematics Subject Classification. 16E05 16E40}
\keywords{Gerstenhaber algebra,quantum torus, Poisson structure}

\begin{abstract}
We study the Hochschild cohomology and the Gerstenhaber algebra structure on the algebraic non-commutative torus/quantum torus orbifolds resulting by the action of finite subgroups of $SL_2(\mathbb Z)$. We also examine the Poisson structures and compute the Poisson cohomology.
\end{abstract}
\maketitle
\section{Introduction}

Inspired by this relationship between Poisson geometry and deformation theory, Jonathan
 Block and Ezra Getzler \cite{BG} and Ping Xu \cite{Xu} independently introduced a notion of a noncommutative Poisson structure and the non-commutative Poisson cohomology was studied by Xu explicitly for the smooth non-commutative 2-torus $\mathcal A_\theta$. The paper of Halbout and Tang \cite{HT} is the seminal paper for studying non-commutative Poisson structures on orbifolds or smash products.  \cite{HT} studied the Gestenhaber bracket and Poisson structure(s) on the $C^\infty (M) \rtimes G$, they showed that the Gerstenhaber bracket on the orbifold is a generalization of the classical Schouten-Nijenhuis bracket on manifold. This Gerstenhaber bracket on the orbifold, called as the ``twisted Schouten-Nijenhuis bracket" was used by them to understand the Poisson structures on the orbifold by solving $[ \Pi, \Pi ] =0$ for $\Pi \in H^2(C^\infty (M) \rtimes G, C^\infty (M) \rtimes G)$. This is how the non-commutative Poisson structure was studied for $\mathcal A\rtimes G$ for $\mathcal A = C^\infty(M)$. We can ask the question, what about such structures when $\mathcal A$ is a non-commutative (/smooth)manifold.\\

The graded Lie bracket on Hochschild cohomology remains elusive in contrast to the cup product. The latter may be defined via any convenient projective resolution. But, the former is defined
on the bar resolution, which is useful theoretically but not computationally, and one typically
computes graded Lie brackets by translating to another more convenient resolution via explicit
chain maps. Such chain maps are not always easy to find. Studying the Grestenhaber bracket on the Hochschild cohomology also gives us an insight to the deformations of the algebra, Hochschild cohomology of the $\mathcal A \rtimes G$, along with the Gerstenhaber bracket, is reflective of the deformation theory of modules, orbifold cohomology and Poisson cohomology. \cite{PPTT} \cite{LW1} \cite{LW2} \cite{BG}.

 Researchers like Witherspoon, Shepler, Negron and Zhou in various collaborations have studied the ``twisted Schouten-Nijenhuis bracket" for several variant spaces and discrete group actions, including the polynomial ring  $Sym(V)$ and the quantum polynomial ring $S_q(V)$ \cite{SW1} \cite{WZ} \cite{NW}. They used appropriate Koszul resolution to study the Gerstenhaber bracket structure. In the paper \cite{WZ} the discrete group action on the quantum symmetric polynomial algebra is considered for the special case when the action by a discrete group $G$ on it is diagonal$( v_i \mapsto \chi_g v_i$ for some character $\chi : G \rightarrow \mathbb C^{\times}$). The Koszul resolution for the quantum symmetric polynomial is due to Wambst\cite{W1} which he obtained by generalizing the Connes' Koszul resolution for non-commutative smooth 2-torus\cite{C} to quantum symmetric algebra $S_q(V)$ associated to a vector space $V$. The quantum torus also known as the quantum torus, can be thought as a quantum symmetric algebra in 2-variables but unlike Witherspoon and Zhou we shall be considering non-diagonal discrete group action on it arising by the restriction of the action of $SL_2( \mathbb Z)$ on the smooth non-commutative torus $\mathcal A_\theta$.

In fact Gerstenhaber bracket is a generalization of the usual Schouten-brackets of multivector fields \cite{T} and we saw in \cite{HT} that the Schouten-bracket gets twisted over orbifolds of smooth manifolds. Here, in this example, we shall see that over the non-commutative orbifolds such a twist can in fact be similarly understood over each twisted component arising by the paracyclic or the spectral decomposition of the cohomology.\\
 
Non-commutative torus was studied extensively by Connes and Rieffel in the 1980's and arose as a quantum deformation of the algebra of smooth functions on the classical 2-torus. It can also be seen as the irrational rotational algebra \cite{R}. Several invariants of the same were studied, which conformed the smoothness of the ``associated non-commutative space". The existence of non-commutative Poisson structure \cite{Xu} on the non-commutative torus establishes it as a preluding example to study noncommutative smooth spaces. What we study here is a dense sub-algebra of it. Let $\theta \in \mathbb R$, consider the group algebra $ \mathbb C<U_1^{\pm 1}, U_2^{\pm 1}>$ of the free group on two generators $U_1$ and $U_2$, with coefficients in $\mathbb C$. We define the associated quantum torus \cite{B} as
\begin{center}

$\mathscr A_\theta := \mathbb C<U_1^{\pm 1}, U_2^{\pm 1}>/<U_2U_1-\lambda U_1U_2>$
\end{center} for a fixed parameter $\lambda= e^{2 \pi i \theta} \in \mathbb C$. Unless specified otherwise, we will assume in this article that
\begin{equation}
\lambda^n \neq 1 \text{ for all } n\in \mathbb Z
\end{equation}

The algebra $\mathscr A_\theta$ may be thought of as a ring of functions on a quantum torus. Geometrically, the algebras $\mathcal A_\theta$, the non-commutative smooth 2-torus, arise as deformations of the ring $C^\infty(\mathbb T)$ of smooth functions on the two-dimensional torus $\mathbb T = S^1 \times S^1$, and as such, these are fundamental examples of noncommutative differentiable manifolds in the sense of Connes \cite{C}. On the other hand, algebraically, $\mathscr A_\theta$ is just a certain norm completion of $\mathcal A_\theta$. 
 
Xu studied the Poisson structure on non-commutative smooth torus $\mathcal A_\theta$\cite{Xu}. The unique 2-cocycle of the non-commutative smooth torus $\mathcal A_\theta$ when $\theta$ satisfies the Diophantine condition gave the Poisson structure. The Poisson cohomology groups were also studied therein, the non-trivial Poisson 2 cocycles give rise to formal deformations of the algebra. This is how the formal deformations are studied and further one can ask about the index theory of such spaces.\\
  
We shall study the non-commutative Poisson structure on the orbifolds of $\mathscr A_\theta$ arising through the action of finite subgroups of $SL_2(\mathbb Z)$. The action of $SL_2(\mathbb Z)$ on $\mathscr A_\theta$ is described as follows. An element 
\begin{center}
$g= \left[
 \begin{array}{cc}
   g_{1,1} & g_{1,2} \\
   g_{2,1} & g_{2,2}
 \end{array} \right]\in SL_2(\mathbb Z)$
\end{center} acts on the generators $U_1$ and $U_2$ as described below
\begin{center}
$\rho_gU_1=e^{(\pi i g_{1,1} g_{2,1})\theta}U_1^{g_{1,1}}U_2^{g_{2,1}} \text{ and }\rho_gU_2=e^{(\pi i g_{1,2} g_{2,2})\theta}U_1^{g_{1,2}}U_2^{g_{2,2}}$\\.
\end{center}
We leave it to the readers to check that the above is indeed an action of $SL_2(\mathbb Z)$ on the algebra $\mathscr A_\theta$, in fact as we shall later see the action preserves the Calabi-Yau algebraic structure of $\mathscr A_\theta$. We illustrate this action by considering the action of $\mathbb Z_4$ on the algebraic noncommutative torus, $\mathscr A_\theta$. We notice that the generator of $\mathbb Z_4$ in $SL_2(\mathbb Z)$, $g= \left[
 \begin{array}{cc}
   0 & -1 \\
   1 & 0
 \end{array}
\right]$ acts on $\mathscr A_\theta$ as follows
\begin{center}
$ \left[
 \begin{array}{cc}
   0 & -1 \\
   1 & 0
 \end{array}
\right] U_1 = U_2^{-1}$ and  $\left[
 \begin{array}{cc}
   0 & -1 \\
   1 & 0
 \end{array}
\right] U_2 = U_1$.
\end{center}

\section{Gestenhaber bracket and the Poisson Structure}
For any algebra $A$ over a field $k$, Hochschild cohomology $HH^\bullet(A)$ is the space $Ext^\bullet_{A\otimes A^{op}}(A,A)$, which has two compatible operations, cup product and bracket. Both operations are defined initially on the bar resolution, a natural $A \otimes A^{op}$-free resolution of $A$. It is a classical fact that the Hochschild cohomology $HH^\ast(A,A) := \bigoplus_{\bullet \in \mathbb Z}H^\bullet(A, A)$ of an associative algebra $A$ carries a Gerstenhaber algebra structure. A Gerstenhaber algebra is a graded associative algebra $(H^\ast = \bigoplus_{i \in \mathbb Z} H^i, \cup)$ together with a degree $-1$ graded Lie bracket $[-, -]$ compatible with the product $\cup$ in the sense of the following Leibniz rule
\begin{equation}
[a \cup b, c] = [a,c] \cup b + (-1)^{(|c|-1)|a|} a\cup [b,c]
\end{equation}
For an associative algebra $(A, \mu)$, the Hochschild co-chain groups $C^\bullet(A,A)$ (cup) product
$\cup : C^m(A, A) \otimes C^n(A, A) \rightarrow C^{m+n}(A, A)$ defined by
\begin{equation}
(f \cup g)(a_1, . . . , a_{m+n}) = \mu(f(a_1, . . . , a_m), g(a_{m+1}, . . . , a_{m+n})).
\end{equation}

It turns out that the Hochschild coboundary operator $\delta$ is a graded derivation with respect to the cup product. Hence, it induces a cup product $\cup$ on the Hochschild cohomology $HH^\ast(A,A)$. Moreover, the co-chain groups $C^\bullet(A, A)$ carry a degree $-1$ graded Lie bracket compatible with the Hochschild co-boundary $\delta$ \cite{G}. Therefore, it gives rise to a degree $-1$ graded Lie bracket on $HH^\ast(A,A)$. The cup product and the degree $-1$ graded Lie bracket on the Hochschild cohomology $HH^\ast(A, A)$ are compatible in the sense of (1.1) to make it into a Gerstenhaber algebra.\\  

Using the Gerstenhaber bracket on $HH^\ast(C^\infty(M))$, one can define the Poisson structure on the manifold $M$, a Poisson structure on a smooth manifold $M$ is a Lie bracket $\{,\}: C^\infty(M) \times C^\infty(M) \rightarrow C^\infty(M)$ satisfying 
\begin{center}
${\displaystyle \{f,gh\}=\{f,g\}h+g\{f,h\}}$
for $f,g, h \in C^\infty(M)$.
\end{center}
For $\Pi \in H^2(C^\infty(M))$ and $[\Pi, \Pi]=0$, the Poisson bracket on $M$ can be defined as 
\begin{center}
$\{f,g\} := [[\Pi, f],g]$.
\end{center}

The above characterization can be generalized to an associative algebra. Hence, the non-commutative Poisson structure of an associative algebra $A$ can be defined as follows:

\begin{definition}
 Let $A$ be an associative algebra. A Poisson
structure on $A$ is an element $\Pi \in H^2(A,A)$ such that $[\Pi, \Pi ]=0$. An algebra with a
Poisson structure is called a Poisson algebra.

\end{definition}
\section{Poisson Structure on non-commutative Orbifolds}

In this section we discuss about the Gerstenhaber brackets and Poisson structure on a general non-commutative orbifold. For a non-commutative manifold $\mathcal A$ and a discrete group $G$ acting on it, we can consider the orbifold $\mathcal A \rtimes G$. To understand the Gerstenhaber bracket and the possible Poisson structure on $\mathcal A \rtimes G$ we have to understand its Hochschild cohomology, $HH^\bullet(\mathcal A \rtimes G, \mathcal A \rtimes G)$.\\

The Hochschild cohomology splits as \cite{GJ}

\begin{equation}
HH^\bullet(\mathcal A \rtimes G, \mathcal A \rtimes G) = \bigoplus_{\gamma \in G} HH^\bullet(\mathcal A, {}_{\gamma}\mathcal A)^G.
\end{equation}
Where ${}_{\gamma}\mathcal A$ is set wise $\mathcal A$ with the left $\mathcal A$-module structure defined as $\alpha \cdot a = \gamma (\alpha)a$ where $\alpha \in \mathcal A$ and $a \in {}_{\gamma}\mathcal A$ \cite{Q1}.
Witherspoon and Shepler \cite{SW2} studied the same and constructed explicit isomorphism between the two cohomology groups described above. The general, method is to decompose into the conjugacy classes of $G$ and using an appropriate projective resolution for $\mathcal A$. Using the resolution the Gerstenhaber bracket described over the bar resolution is transferred via chain map to the convenient resolution and an explicit formulation of the Gerstenhaber bracket is then obtained. The article \cite{SW2} had studied the Gestenhaber bracket over the polynomial skew group algebra and provided necessary conditions to find possible Poisson structure for the polynomial skew group algebra.\\

We shall understand the Gerstenhaber bracket and then find the Poisson structure(s) and finally the associated Poisson cohomology for the non-commutative orbifolds arising from the action of the four finite subgroups of $SL_2(Z)$ on the quantum torus.

\section{Connes' Koszul Resolution}
We briefly recall the Connes' Koszul resolution in this section. It is a projective resolution for the non-commutative torus and the quantum torus.  Wambst \cite{W1} generalized it to the higher dimensional quantum torus and later computed its homology \cite{W2}.
\begin{center}
$\mathscr A_\theta \xleftarrow{\epsilon} (\mathscr A_\theta)^e \otimes \mathbb C \xleftarrow{b_1} (\mathscr A_\theta)^e \otimes <e_1>\bigoplus (\mathscr A_\theta)^e\otimes <e_2> \xleftarrow{b_2} (\mathscr A_\theta)^e \otimes <e_1 \wedge e_2>$
\end{center}
where, $(\mathscr A_\theta)^e = \mathscr A_\theta \otimes (\mathscr A_\theta)^{op}$\\
$\epsilon(a\otimes b ) = ab;
b_1(1\otimes e_j)= 1\otimes {U_j}- {U_j}\otimes 1 \text{ and } \newline
b_2(1\otimes( e_1 \wedge e_2 ) ) = (U_2\otimes 1 - \lambda \otimes U_2 )\otimes e_1- ( \lambda U_1\otimes 1 - 1\otimes U_1 )\otimes e_2.$\\
We shall use the above resolution to study the Hochschild cohomology $HH^\bullet({}_{\gamma} \mathscr A_\theta)$, which is the cohomology of the following complex:

$${}_{\gamma}\mathscr A_\theta \xrightarrow{{}_{\gamma}\alpha_1}{}_{\gamma}\mathscr A_\theta \oplus {}_{\gamma}\mathscr A_\theta \xrightarrow{{}_{\gamma}\alpha_2} {}_{\gamma}\mathscr A_\theta \rightarrow 0,$$
where the maps are as below:
$${}_{\gamma}\alpha_1(\varphi) = ((\gamma \cdot U_1)\varphi - \varphi U_1,\displaystyle (\gamma\cdot U_2) \varphi - \varphi U_2)\; ;{}_{\gamma}\alpha_2(\varphi_1, \varphi_2) = \displaystyle (\gamma \cdot U_2)\varphi_1-\lambda \varphi_1 U_2 - \lambda (\gamma\cdot U_1) \varphi_2 + \varphi_2 U_1.$$ We leave it upon the readers to verify these maps, which can be done easily using the Connes' Koszul resolution.

\section{Some Homology groups of $\mathcal A_\theta \rtimes \Gamma$}

\begin{lemma}
$HH^0(\mathcal A_\theta \rtimes \Gamma)  = \mathbb C $\\
\end{lemma}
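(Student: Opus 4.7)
The plan is to apply the group-graded decomposition (2.1)
$$HH^0(\mathscr A_\theta \rtimes \Gamma) = \bigoplus_{\gamma \in \Gamma} HH^0(\mathscr A_\theta, {}_\gamma \mathscr A_\theta)^\Gamma$$
and show that only the $\gamma = e$ summand survives, producing exactly the scalars $\mathbb{C}$. Using the Connes--Wambst resolution from Section 3, the $\gamma$-th piece is identified with $\ker({}_\gamma\alpha_1)$, namely the $\varphi \in \mathscr A_\theta$ satisfying $(\gamma\cdot U_1)\varphi = \varphi U_1$ and $(\gamma\cdot U_2)\varphi = \varphi U_2$.

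For $\gamma = e$ this is exactly the center of $\mathscr A_\theta$. Expanding $\varphi = \sum c_{m,n} U_1^m U_2^n$ and using $U_2^n U_1 = \lambda^n U_1 U_2^n$, the condition $U_1\varphi = \varphi U_1$ forces $(1-\lambda^n)c_{m,n} = 0$; the standing assumption $\lambda^n \ne 1$ for $n \ne 0$ then annihilates every coefficient with $n \ne 0$, and the symmetric argument from $U_2\varphi = \varphi U_2$ kills coefficients with $m \ne 0$. Only the constant term survives, giving $\mathbb{C}\cdot 1$.

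For $\gamma \ne e$, I would use a support-translation argument. Since $\gamma\cdot U_i$ is a nonzero scalar multiple of the monomial $U_1^{g_{1i}}U_2^{g_{2i}}$, expanding $(\gamma\cdot U_1)\varphi = \varphi U_1$ in the basis $\{U_1^M U_2^N\}$ and matching coefficients yields a recursion of the form
$$c_{M-g_{11},\,N-g_{21}} = (\text{nonzero scalar})\cdot c_{M-1,\,N},$$
which translates the support of $\varphi$ by the vector $(1-g_{11},-g_{21})$; similarly the second equation translates it by $(-g_{12},1-g_{22})$. Since $\gamma \ne I$ implies at least one of these vectors is nonzero, iterating spreads any nonzero coefficient over an infinite subset of $\mathbb{Z}^2$. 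Because elements of $\mathscr A_\theta$ are finite linear combinations of monomials, $\varphi$ must be zero, so $HH^0(\mathscr A_\theta,{}_\gamma\mathscr A_\theta)=0$.

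Taking $\Gamma$-invariants in the decomposition leaves only the $\gamma = e$ contribution $\mathbb{C}$, which is pointwise fixed by $\Gamma$, yielding the result. The main obstacle is step three: one must set up the bookkeeping for the twisted commutation with care and observe that no nontrivial $\gamma \in SL_2(\mathbb{Z})$ can simultaneously satisfy $(g_{11},g_{21}) = (1,0)$ and $(g_{12},g_{22}) = (0,1)$, so at least one translation vector is nonzero and the finite-support hypothesis bites.
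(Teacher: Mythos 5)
Your proof follows the same route as the paper: decompose $HH^0(\mathscr A_\theta\rtimes\Gamma)$ over $\gamma\in\Gamma$, identify each summand with $\ker({}_{\gamma}\alpha_1)$ via the Connes--Koszul resolution, and check that only the untwisted summand contributes the center $\mathbb C\cdot 1$, which is $\Gamma$-invariant. The paper dismisses the vanishing of the twisted components as a ``routine calculation''; your support-translation argument (using that no nonidentity $\gamma$ fixes both columns of the identity matrix, so a nonzero coefficient would propagate over an infinite subset of $\mathbb Z^2$) is a correct way of carrying out exactly that calculation, so the two proofs agree in substance.
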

\begin{proof}
Using the resolution, we have , $HH^0(\mathcal A_\theta) = \mathbb C$. It is generated by the $a_{0,0} \otimes 1 \otimes 1$ and since the chain maps at the zeroth cohomology are identity maps, the cocycle is $\G$ invariant. Similarly, we see that $HH^0({}_{\gamma}\mathcal A_\theta)$ is the kernel of the map ${}_{\gamma}\alpha_1$. The result is a routine calculation and the invariance is also a straightforward computation.
\end{proof}
\begin{lemma}
For $\theta$ satisfying Diophantine condition, $HH^2(\mathcal A_\theta)^ \G \cong \mathbb C$.
\end{lemma}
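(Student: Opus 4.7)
The plan is to use Connes' Koszul resolution (specialized to $\gamma = e$) to compute $HH^2(\mathcal A_\theta)$ as $\mathrm{coker}(\alpha_2)$, and then to verify that the resulting generator is fixed by every $g \in \Gamma \subset SL_2(\mathbb Z)$.

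For the first step, I expand $\varphi_1, \varphi_2 \in \mathcal A_\theta$ in the monomial basis as $\varphi_1 = \sum a_{m,n} U_1^m U_2^n$ and $\varphi_2 = \sum b_{m,n} U_1^m U_2^n$ and apply the commutation rules $U_2 U_1^m U_2^n = \lambda^m U_1^m U_2^{n+1}$ and $U_1^m U_2^n U_1 = \lambda^n U_1^{m+1} U_2^n$. The coefficient of $U_1^p U_2^q$ in $\alpha_2(\varphi_1, \varphi_2)$ then collapses to $(\lambda^p - \lambda) a_{p, q-1} + (\lambda^q - \lambda) b_{p-1, q}$. Under the Diophantine hypothesis, $\lambda^k = \lambda$ forces $k = 1$, so for every $(p,q) \neq (1,1)$ at least one of the two factors is nonzero and $U_1^p U_2^q$ is hit by the image. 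For $(p,q) = (1,1)$ both factors vanish simultaneously and $U_1 U_2$ cannot be reached. This gives $HH^2(\mathcal A_\theta) \cong \mathbb C \cdot [U_1 U_2]$.

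For the $\Gamma$-invariance step, I would identify this class with Xu's Poisson bivector $\Pi(a,b) = \delta_1(a)\delta_2(b) - \delta_2(a)\delta_1(b)$, where $\delta_j(U_i) = \delta_{ij} U_i$ are the canonical derivations, via the standard quasi-isomorphism from the Koszul to the bar resolution. A short calculation shows that for any $g \in SL_2(\mathbb Z)$ the conjugation $\delta_i \mapsto g\, \delta_i\, g^{-1}$ realises the linear action of $g$ on $\mathrm{span}_{\mathbb C}\{\delta_1,\delta_2\}$, so $\delta_1 \wedge \delta_2$ is rescaled by $\det g = 1$ and $g \cdot \Pi = \Pi$. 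Therefore every $g \in \Gamma$ fixes the generating class, which yields $HH^2(\mathcal A_\theta)^\Gamma \cong \mathbb C$.

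The main obstacle is reconciling this determinant twist with the naive algebra action on the Koszul representative $[U_1 U_2]$: for instance, with $g = \begin{pmatrix} 0 & -1 \\ 1 & 0 \end{pmatrix}$, applying $g$ directly to $U_1 U_2$ gives a scalar multiple of $U_1 U_2^{-1}$, which by the cokernel analysis above vanishes in $HH^2$. The missing ingredient is a $g$-equivariant chain map from the original Koszul complex to the Koszul complex built from the new generators $g(U_1), g(U_2)$, whose induced map on the top generator $e_1 \wedge e_2$ provides exactly the $\det g$ factor. I would sidestep this bookkeeping by working with the Poisson-bivector representative, for which the determinant twist is visible directly in the derivation calculation.
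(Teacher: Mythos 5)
Your overall strategy is sound, and your invariance argument is genuinely different from the paper's. The paper cites Xu's Lemma 4.1 for $HH^2(\mathcal A_\theta)\cong\mathbb C$ and then checks invariance by an explicit push--pull through the bar complex using Connes' chain maps $h_2$ and $k_2$, carried out case by case (the $\mathbb Z_3$ computation is written out, the others asserted to be similar). You instead represent the generator by the bivector $\Pi=\delta_1\wedge\delta_2$ on the bar complex, where the $\Gamma$-action is transparent, and observe that $\rho_g\delta_j\rho_g^{-1}=\sum_k(g^{-1})_{jk}\delta_k$, so $g\cdot\Pi=\det(g^{-1})\,\Pi=\Pi$ on the nose. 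This is correct, uniform over all finite subgroups at once, and cleaner than the paper's computation; your closing remark correctly identifies why one cannot simply apply $g$ to the Koszul representative $[U_1U_2]$ (the Koszul complex is not $g$-equivariant without a connecting chain map), and sidestepping this via the bar complex is exactly the right move. To make the identification complete you should record the one-line check that $\Pi$ pulls back under $h_2$ to a nonzero multiple of $U_1U_2$ (one gets $-(2\pi i)^2\cdot 2\lambda\, U_1U_2$), so that $[\Pi]$ really is the generator of the one-dimensional $HH^2$.

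There is, however, a genuine gap in your first step. The lemma concerns the smooth algebra $\mathcal A_\theta$ (Schwartz-decay coefficients), which is why the Diophantine condition appears at all; your cokernel argument only uses that $\lambda^k=\lambda$ forces $k=1$, which is mere irrationality of $\theta$. For the smooth algebra it is not enough that the factors $\lambda^p-\lambda$ and $\lambda^q-\lambda$ are nonzero: to show that every $U_1^pU_2^q$ with $(p,q)\neq(1,1)$ is in the image of $\alpha_2$ you must divide the coefficients $a_{p,q}$ by these factors and land back in $\mathcal A_\theta$, i.e.\ preserve rapid decay. This is precisely where the Diophantine lower bound $|\lambda^n-1|\geq C|n|^{-k}$ is needed, and without it the cokernel can be infinite-dimensional. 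Either import Xu's Lemma 4.1 for this step (as the paper does), or add the Diophantine estimate explicitly; as written, your surjectivity claim is only valid for the algebraic quantum torus $\mathscr A_\theta$ of finite sums.
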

\begin{proof}

We know from \cite[Lemma 4.1]{Xu} that $HH^2(\mathcal A_\theta) \cong \mathbb C$. To check its invariance under the action of $\G$, we push the cocycle into the bar complex using the map $h_2$ \cite{C} and after the action by the generator of $\G$ we pull it back on to the Connes' Koszul resolution and compare the equivalence classes. It can be easily checked that $HH^2(\mathcal A_\theta)^\G \cong \mathbb C$. We shall explicitly calculate for $\mathbb Z_3$, other cases too have similar computation. We abide by the notations of \cite{C}.\\

For $\varphi \in {}_{\omega}\mathcal A_\theta$, let $\widetilde{\varphi}$ be the corresponding element of $\text{Hom}_{\mathcal A_\theta^{e}}(\Omega_2,{}_{\omega}\mathcal A_\theta)$. Then
$$\widetilde{\varphi}(a\otimes b \otimes e_1 \wedge e_2)(x)=\varphi((\omega\cdot b)xa),$$
for all  $a,b,x \in \mathcal A_\theta$.
Let $\psi = k_2^{\ast} \widetilde{\varphi} = \widetilde{\varphi} \circ k_2$. We have
$$\psi(x,x_1,x_2)=\widetilde{\varphi}(k_2(I \otimes x_1 \otimes x_2))(x),$$
for all $x,x_1,x_2 \in \mathcal A_\theta$.
The group $\omega$ acts on $\mathcal A_\theta$ in the bar complex as $$\omega \cdot \chi(x,x_1,x_2)=\chi(\omega\cdot x,\omega \cdot x_1, \omega \cdot x_2).$$ 
Further we pull the map ${}_{\omega}\psi := \omega \cdot \psi$ back on to the Connes complex via the map $h_2^{\ast}$. Let $w=h_2^{\ast}({}_{\omega}\psi)$ denote the pull-back of ${}_{\omega}\psi$ on the Connes' Koszul complex. We have  
\begin{center}
$w(x)={}_{\omega}\psi(x,U_2,U_1)-\lambda{}_{\omega}\psi(x,U_1,U_2)=\psi(\omega \cdot x, \frac{U_1U_2^{-1}}{\sqrt\lambda},U_2^{-1}) -\lambda \psi(\omega \cdot x, U_2^{-1},\frac{U_1U_2^{-1}}{\sqrt\lambda})= \newline
\widetilde{\varphi}(k_2(I \otimes \frac{U_1U_2^{-1}}{\sqrt\lambda}\otimes U_2^{-1}))(\omega\cdot x)-\lambda \widetilde{\varphi}(k_2(I \otimes U_2^{-1}\otimes \frac{U_1U_2^{-1}}{\sqrt\lambda}))(\omega\cdot x)$.
\end{center}
Using the results of \cite{C} and \cite[Section 6]{Q1}, we have
$$k_2(I \otimes U_1U_2^{-1}\otimes U_2^{-1})-\lambda k_2(I \otimes U_2^{-1}\otimes U_1U_2^{-1})= (U_2^{-1} \otimes U_2^{-2}).$$
Hence,
\begin{center}
$ \displaystyle\frac{1}{\sqrt\lambda}(\widetilde{\varphi}((k_2(I \otimes U_1U_2^{-1}\otimes U_2^{-1}))(\omega\cdot x)-\lambda \widetilde{\varphi}(k_2(I \otimes U_2^{-1}\otimes U_1U_2^{-1})))(\omega\cdot x))=\displaystyle\frac{1}{\sqrt\lambda}\widetilde{\varphi}((U_2^{-1} \otimes U_2^{-2}))(\omega\cdot x) = {\sqrt\lambda}\varphi(U_1^{-2 }U_2^{2} \cdot (\omega \cdot x)\cdot U_2^{-1})$.
\end{center}
The cocycle $x_{-1,-1} \in H^2(\mathcal A_\theta, \mathcal A_\theta)$ which on the Connes' Koszul complex is supported at the ${(-1,-1)}$ is invariant under the action of $\mathbb Z_3$ because: \par

\begin{center}
$\displaystyle\frac{1}{\sqrt\lambda}x_{-1,-1}(U_2^{-2} (\omega\cdot x)  U_2^{-1})= \displaystyle\frac{1}{\sqrt\lambda}\varphi_{-1,-1}(U_2^{-2} (\omega\cdot (x_{-1,-1} U_1^{-1} U_2^{-1}))  U_2^{-1})=\displaystyle\frac{1}{\sqrt\lambda}\varphi_{-1,-1}(U_2^{-2} x_{-1,-1} U_2 {(\frac{U_1 U_2^{-1}}{\sqrt\lambda})}^{-1}  U_2^{-1})=\varphi_{-1,-1}(x_{-1,-1} U_2^{-2} U_2 U_2 U_1^{-1} U_2^{-1}) = \varphi_{-1,-1}(x_{-1,-1} U_1^{-1} U_2^{-1}) = x_{-1,-1}$.
\end{center}
Hence, $HH^2(\mathcal A_\theta)^{\mathbb Z_3} \cong \mathbb C$.\\

\end{proof}

\section{Hochschild cohomology $HH^\bullet(-)$ of $\mathscr A_\theta \rtimes \G$}
\begin{thm}
Let $\Gamma$ be any finite subgroup of $SL_2(\mathbb Z)$ then we have the following:\\
$HH^0(\mathscr A_\theta \rtimes \G) = \mathbb C$, $HH^1(\mathscr A_\theta \rtimes \G) = 0$, $HH^\bullet(\mathscr A_\theta \rtimes \Gamma) = 0 \text{ for } \bullet > 2$.\\ and \\
$HH^2(\mathscr A_\theta \rtimes \Gamma)  \cong\begin{cases}
\mathbb C^5  & \text{ for } \G = \mathbb Z_2 \\
\mathbb C^7 & \text{ for } \G = \mathbb Z_3\\
\mathbb C^8  & \text{ for } \G = \mathbb Z_4 \\
\mathbb C^9 & \text{ for } \G = \mathbb Z_6. \end{cases}$\\

\end{thm}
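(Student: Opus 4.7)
The plan is to apply the decomposition (2.1),
$$HH^\bullet(\mathscr{A}_\theta\rtimes\Gamma) = \bigoplus_{\gamma\in\Gamma} HH^\bullet(\mathscr{A}_\theta, {}_\gamma\mathscr{A}_\theta)^\Gamma,$$
compute each twisted sector $HH^\bullet(\mathscr{A}_\theta, {}_\gamma \mathscr{A}_\theta)$ from the three-term complex of Section~3, and then extract the $\Gamma$-invariants. The vanishing in degrees $\geq 3$ is immediate, since the Connes--Wambst Koszul resolution has length $2$.

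For the identity sector ($\gamma = e$) the complex is the standard Koszul complex of $\mathscr{A}_\theta$. The hypothesis $\lambda^n \neq 1$ for $n \neq 0$ reduces the calculation to a monomial-by-monomial analysis yielding $HH^0(\mathscr{A}_\theta) = \mathbb{C}$, $HH^1(\mathscr{A}_\theta) = \mathbb{C}^2$ (spanned by the two canonical derivations), and $HH^2(\mathscr{A}_\theta) = \mathbb{C}$. The $\Gamma$-action on $HH^1$ is the restriction of the standard representation of $SL_2(\mathbb{Z})$, which has no invariants under any nontrivial finite subgroup, hence $(HH^1)^\Gamma = 0$. The invariance of $HH^2$ is Lemma~5.2, whose argument extends verbatim to $\mathbb{Z}_2,\mathbb{Z}_4,\mathbb{Z}_6$. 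Thus the identity sector contributes $\mathbb{C}, 0, \mathbb{C}$ in degrees $0, 1, 2$.

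For each nonidentity $\gamma$ of finite order in $SL_2(\mathbb{Z})$, the eigenvalues of $\gamma$ are primitive roots of unity different from $1$, so $\det(\gamma - I) \neq 0$. Expanding ${}_\gamma\alpha_1,{}_\gamma\alpha_2$ in the basis $\{U_1^m U_2^n\}$ writes them as block-triangular operators in the $\mathbb{Z}^2$-grading; a diagonal argument (tracking the $\lambda$-powers coming from $U_2 U_1 = \lambda U_1 U_2$) then yields $HH^0({}_\gamma \mathscr{A}_\theta) = HH^1({}_\gamma \mathscr{A}_\theta) = 0$ and $\dim HH^2({}_\gamma \mathscr{A}_\theta) = |\det(\gamma - I)|$, with a basis naturally indexed by the fixed points of $\gamma$ on the $2$-torus $\mathbb{R}^2/\mathbb{Z}^2$. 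The remaining task is to count $\Gamma$-orbits on these fixed-point sets: for $\mathbb{Z}_2$, $4$ orbits from $-I$ give $1+4=5$; for $\mathbb{Z}_3$, $3$ orbits from each of the two order-$3$ elements give $1+3+3=7$; for $\mathbb{Z}_4$, $3$ orbits from $-I$ together with $2$ orbits from each order-$4$ element give $1+3+2+2=8$; for $\mathbb{Z}_6$, $2$ orbits each from the order-$2$ and order-$3$ elements and $1$ orbit from each order-$6$ element give $1+2+2+2+1+1=9$. Summing over sectors produces the claimed dimensions.

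The technical core of the argument is the twisted-sector computation: showing that ${}_\gamma\alpha_1$ has trivial kernel and that the cohomology is concentrated in top degree of dimension $|\det(\gamma - I)|$ requires an explicit analysis on an infinite-dimensional Laurent polynomial space, and managing the $\lambda$-dependent scalars so that the triangularity argument does not collapse is the only delicate point. Once this is in hand, the orbit count at the end is a routine case check on the four finite cyclic subgroups of $SL_2(\mathbb{Z})$.
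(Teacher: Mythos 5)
Your proposal follows the same overall strategy as the paper: the Getzler--Jones decomposition of $HH^\bullet(\mathscr A_\theta\rtimes\Gamma)$ into $\Gamma$-invariants of $\gamma$-twisted sectors, the length-two Connes--Wambst Koszul complex for each sector (whence vanishing above degree $2$), the identification of $HH^1(\mathscr A_\theta)$ with the span of the two canonical derivations and the observation that the standard representation of a nontrivial finite subgroup of $SL_2(\mathbb Z)$ has no invariants. Where you genuinely diverge is in how the twisted degree-$2$ contributions are counted. The paper asserts that \emph{all} $\gamma$-twisted $2$-cocycles are $\Gamma$-invariant and lists generators $a_{p,q}$ with $p,q\in\{0,1\}$; taken literally this gives $1+4+2+2=9$ for $\mathbb Z_4$, not the stated $8$. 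Your version --- $\dim HH^2({}_{\gamma}\mathscr A_\theta)=|\det(\gamma-I)|$ with basis indexed by the fixed points of $\gamma$ on $\mathbb R^2/\mathbb Z^2$, followed by a count of $\Gamma$-orbits on each fixed-point set --- is the computation that actually reproduces the table (the generator of $\mathbb Z_4$ identifies two of the four $2$-torsion points in the $-I$ sector, giving $3$ rather than $4$, and similarly for the $\mathbb Z_6$ sectors). The one step you assert rather than prove is that the $\Gamma$-action on $HH^2({}_{\gamma}\mathscr A_\theta)$ is exactly the permutation action on fixed points with no scalar (character) twist; since $\Gamma\subset SL_2(\mathbb Z)$ the relevant determinant characters are trivial, but this is precisely the kind of invariance check the paper performs in Lemma 4.2 by transporting cocycles through the chain maps $h_2,k_2$, and it must be carried out for the twisted sectors as well. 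Finally, for $HH^1({}_{\gamma}\mathscr A_\theta)=0$ the paper cites a duality with $HH^1(\mathscr A_\theta,{}_{\gamma}\mathscr A_\theta^{\ast})=0$ from [Q3], whereas you propose a direct triangularity argument on the twisted Koszul complex; either route works, but yours still needs to be executed, as you acknowledge.
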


\begin{proof}[Proof of Theorem 5.1]

The Hochschild cohomology decomposes as follows: 
$$HH^\bullet(\mathscr A_\theta \rtimes \G) = \bigoplus_{\gamma \in \G} H^\bullet(\mathscr A_\theta, {}_{\gamma}\mathscr A_\theta)^\G.$$

It is clear from the Koszul resolution that the cohomology  $HH^\bullet((\mathscr A_\theta \rtimes \G))$ for $\bullet > 2$ vanishes. The second cohomology $HH^2(\mathscr A_\theta \rtimes \G)$ has the following decomposition:

$$HH^2(\mathscr A_\theta \rtimes  \G) \cong \bigoplus_{\gamma \in \G} H^2({}_{\gamma}\mathscr A_\theta)^\G.$$

We have $HH^2(\mathscr A_\theta)^\G = \mathbb C$, the proof is similar to the proof in the last section for the smooth orbifolds.\\

To calculate $HH^2({}_{\gamma}\mathscr A_\theta)^\G$ we firstly study $HH^2({}_{\gamma}\mathscr A_\theta) = {}_{\gamma}\mathscr A_\theta / im({}_{\gamma}\alpha_2)$ on the Connes' Koszul complex. It is a straight forward computation that shows that for $\gamma \in \G$, on the Koszul complex, the group $HH^2({}_{\gamma}\mathscr A_\theta)$ is generated by equivalence classes of elements of the form $$a_{p,q} \otimes 1 \otimes e_1 \wedge e_2 \in {}_{\gamma}\mathscr A_\theta \otimes (\mathscr A_\theta)^e \otimes \Omega_2.$$
where $p,q \in  \{0,1\}$. For example, for $g=\sqrt{-1} \in \mathbb Z_4$; $HH^2({}_{\gamma}\mathscr A_\theta)$ is two dimensional and is generated by $a_{0,0} \otimes 1 \otimes e_1 \wedge e_2$ and $a_{1,0} \otimes 1 \otimes e_1 \wedge e_2$. \\

Similar to the method we adopted, we can check that all the $\gamma$-twisted 2-cocycles are $\G$ invariant. In general for a commutative algebra $A$ and some $d \in \mathbb N$, $HH^d(A) \cong HH_{n-d}(A)$ if $A$ is Gorenstein \cite{V} and for non-commutative algebras that are Calabi-Yau such a duality is established \cite{CYZ}. We see that the Hochschild homology of $\mathscr A \rtimes \G$ \cite{Q1} and the cohomology computed above are dual to each other in dimension. This gives us an example of an algebra whose Van den Bergh duality is preserved upon the action of $\G$. In general one can ask if given a Calabi Yau algebra and a finite group acting on it, is the skew group algebra Calabi Yau? Recently this phenomenon was addressed by Meur \cite{M}.

The dimension of the zeroth cohomology can be concluded from the previous section. To determine the $HH^1(\mathscr A_\theta \rtimes \G)$ we firstly observe that $HH^1(\mathscr A_\theta)$ is two dimensional and is generated by the equivalence class of elements supported at $\phi^1_{-1,0}$ and $\phi^2_{0,-1}$. Using the maps $k_1$ and $h_1$ we check the invariance of these two cocycles in the similar way that we did for the 2-cocycle above. We can easily observe that $HH^1(\mathscr A_\theta)^\G=0$ for all $\G$. We now claim that $HH^1({}_{\gamma}\mathscr A_\theta) = 0$ for all $\gamma \in \G$. This statement is an straightforward corollary of the fact that $HH^1(\mathscr A_\theta, {}_{\gamma}\mathscr A_\theta^\ast) = 0$ for all $\gamma \in \G$ (see  \cite[Lemma 2.6]{Q3} ).
\end{proof}

\section{Gestenhaber Bracket for $\mathscr A_\theta \rtimes \G$}
We have described the above the method to compute the Gerstenhaber bracket by transporting the Gerstenhaber structure from the bar complex to convenient complex(here the $\gamma$-twisted Koszul complex) and then gain push forward to the bar complex for the explicit formula. It is to be noted that for $f \in HH^n(A,{}_{\gamma}A)$ and $g \in HH^m(A,{}_{\mu}A)$, for $\gamma$, $\mu \in \G$, the Gerstenhaber bracket $[f,g]$ is a $n+m-1$ cocycle. We can hence conclude that for any $\G < SL_2(\mathbb Z)$, finite subgroup, the Gestenhaber bracket on $HH^\bullet(\mathscr A_\theta \rtimes \G)$ is trivial.\\

\begin{lemma}
The Gerstenhaber bracket on the non-commutative torus orbifolds $\mathscr A_\theta \rtimes \G$ for $\G$ finite subgroups of $SL_2(\mathbb Z)$ is 0. That is:
$$ [-,-]: HH^\ast(\mathscr A_\theta  \rtimes \G) \rightarrow HH^\ast(\mathscr A_\theta  \rtimes \G).$$
is the zero map. 
\end{lemma}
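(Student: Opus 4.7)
The plan is to deduce the vanishing of the Gerstenhaber bracket purely from degree considerations, using the cohomology computation in Theorem~5.1 together with the general fact that the bracket has degree~$-1$.

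First, I would recall that the Gerstenhaber bracket restricts to bilinear maps $[-,-]\colon HH^n(\mathscr A_\theta\rtimes\G)\otimes HH^m(\mathscr A_\theta\rtimes\G)\to HH^{n+m-1}(\mathscr A_\theta\rtimes\G)$. By Theorem~5.1, the total cohomology is concentrated in degrees $0$ and $2$: namely $HH^0=\mathbb C$, $HH^1=0$, and $HH^\bullet=0$ for all $\bullet\geq 3$. I would then enumerate the bidegrees $(n,m)$ for which the source is nonzero; these are exactly $(0,0)$, $(0,2)$, $(2,0)$, and $(2,2)$. The pair $(0,0)$ lands in $HH^{-1}$, which is zero by convention; the pairs $(0,2)$ and $(2,0)$ land in $HH^1=0$; and $(2,2)$ lands in $HH^3=0$. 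All other bidegrees have a zero factor in the source. Hence the bracket $[-,-]$ is identically zero.

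There is no substantive obstacle once the cohomological computation of Theorem~5.1 is in hand; the argument is essentially a pure dimension count, exploiting the facts that the Connes--Wambst Koszul resolution has length $2$ (killing $HH^\bullet$ for $\bullet\geq 3$) and that $HH^1(\mathscr A_\theta\rtimes\G)=0$ (killing the only output degree that could be hit from $HH^0\otimes HH^2$). The paragraph preceding the lemma already flags this argument by reminding the reader that a bracket of an $n$-cocycle and an $m$-cocycle gives an $(n+m-1)$-cocycle; I would simply make that degree-count explicit in each of the four cases above, with no need to transport formulas from the bar complex to the $\gamma$-twisted Koszul complex, since the compatibility of the bracket with the paracyclic splitting $HH^\bullet(\mathscr A_\theta\rtimes\G)=\bigoplus_{\gamma\in\G}H^\bullet(\mathscr A_\theta,{}_{\gamma}\mathscr A_\theta)^\G$ only refines, rather than circumvents, the degree obstruction.
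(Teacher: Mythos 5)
Your proposal is correct and follows essentially the same route as the paper: the paper's proof is precisely this degree count, observing that a bracket of a $2$-cocycle with a $0$-cocycle lands in $HH^1(\mathscr A_\theta\rtimes\G)=0$ and that $[\Pi^\G_i,\Pi^\G_i]$ is a $3$-cocycle, hence $0$ by Theorem~5.1. Your version is in fact slightly more complete, since you also record the $(0,0)$ bidegree landing in $HH^{-1}=0$.
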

\begin{proof}
Any non-trivial 2-cocycle, $\Pi^\G_i$, when paired with any of the $0$-cocycles should yield a 1-cocycle in $HH^1(\mathscr A_\theta \rtimes \G)(= 0)$. We denote by $\Pi^\Gamma_i$ a 2-cocycle in $\mathscr A_\theta \rtimes \G$. Enough to show that $[\Pi^\G_i, \Pi^\G_i]=0$, since $[\Pi^\G_i, \Pi^\G_i]$ is a 3-cocycle in $HH^\ast(\mathscr A_\theta \rtimes \G)$ hence 0.\\

We note that the untwisted 2-cocycle $\Pi_0 =  \delta_1 \wedge \delta_2$ where $\delta_1$ and $\delta_2$ are the two canonical derivations on $\mathscr A_\theta$ given by,
$$\delta_1(U_1^n U_2^m) = 2 \pi i n U_1^n U_2^m$$ and $$\delta_2(U_1^n U_2^m) = 2 \pi i m U_1^n U_2^m$$ 
 
\end{proof}

\section{Poisson Structure(s) and cohomology of $\mathscr A_\theta \rtimes \G$}
The Poisson cohomology for a Poisson algebra is defined as follows. For an associative Poisson algebra $(A, \Pi)$, we define a co-chain complex $(HH^\ast(A,A),d_{\Pi})$, 
$$d_{\Pi}:HH^\bullet(A,A) \rightarrow HH^{\bullet +1}(A,A)$$ where
$$d_{\Pi}(U)= [\Pi, U], \text{ for all }U \in  HH^i(A,A).$$
From the condition that $[\Pi, \Pi] = 0$ and the graded Jacobi identity of the Gerstenhaber brackets, it follows that $d_{\Pi}^2 = d_{\Pi} \circ d_{\Pi} = 0$. The cohomology of this complex $(HH^\ast(A,A), d_{\Pi})$ is called Poisson cohomology of $(A,\Pi)$ and denoted by $H_{\Pi}^\ast(A)$.\\

Since the Gerstenhaber bracket is 0, all the 2-cocycles are Poisson structures on $\mathscr A_\theta \rtimes \G$. 
\begin{thm} For all $i$,
$H_{\Pi^\G_i}^0(\mathscr A_\theta \rtimes \G) = \mathbb C$, $H_{\Pi^\G_i}^1(\mathscr A_\theta \rtimes \G) = 0$ and \\
$H_{\Pi^\G_i}^2(\mathscr A_\theta \rtimes \Gamma)  \cong\begin{cases}
\mathbb C^5  & \text{ for } \G = \mathbb Z_2 \\
\mathbb C^7 & \text{ for } \G = \mathbb Z_3\\
\mathbb C^8  & \text{ for } \G = \mathbb Z_4 \\
\mathbb C^9 & \text{ for } \G = \mathbb Z_6. \end{cases}$\\
$H_{\Pi^\G_i}^\bullet(\mathscr A_\theta \rtimes \Gamma) = 0 \text{ for } \bullet > 2$.\\
\end{thm}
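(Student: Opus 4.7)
The plan is to observe that the Poisson differential $d_{\Pi^\G_i}(U) = [\Pi^\G_i, U]$ is defined by the Gerstenhaber bracket on $HH^\ast(\mathscr A_\theta \rtimes \G)$, and then to invoke Lemma 6.1, which establishes that this bracket vanishes identically. Once the differential is shown to be the zero map, the Poisson cohomology collapses to the underlying Hochschild cohomology, and the computation reduces to citing Theorem 5.1.

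More concretely, first I would recall the definition of the Poisson cochain complex: for any 2-cocycle $\Pi^\G_i$ satisfying $[\Pi^\G_i, \Pi^\G_i] = 0$, the differential acts as $d_{\Pi^\G_i}(U) = [\Pi^\G_i, U]$. The hypothesis $[\Pi^\G_i, \Pi^\G_i] = 0$ required to make $d_{\Pi^\G_i}$ into a differential is automatic here, again by Lemma 6.1. Next I would point out that for an arbitrary cohomology class $U \in HH^\bullet(\mathscr A_\theta \rtimes \G)$, the bracket $[\Pi^\G_i, U]$ lies in $HH^{\bullet+1}(\mathscr A_\theta \rtimes \G)$, and by Lemma 6.1 every such bracket vanishes. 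Consequently $d_{\Pi^\G_i} \equiv 0$ on the whole complex.

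With $d_{\Pi^\G_i} = 0$, the Poisson cohomology coincides degree by degree with the Hochschild cohomology, i.e., $H_{\Pi^\G_i}^\bullet(\mathscr A_\theta \rtimes \G) \cong HH^\bullet(\mathscr A_\theta \rtimes \G)$ for every $\bullet \geq 0$ and every $i$. Plugging in the values obtained in Theorem 5.1 then yields the dimensions claimed for $H_{\Pi^\G_i}^0$, $H_{\Pi^\G_i}^1$, $H_{\Pi^\G_i}^2$, and the vanishing in degrees above $2$.

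Since the substantive analytic work (invariance of the twisted cocycles, vanishing of $HH^1$, structure of $HH^2$, triviality of the Gerstenhaber bracket) has already been carried out in Sections 5 and 6, there is no genuine obstacle remaining; the result is a formal consequence of assembling those two ingredients. The only point to be careful about is confirming that the equality $d_{\Pi^\G_i} = 0$ holds on cochains or at least on cohomology in a way compatible with passing to the spectral decomposition, which follows because Lemma 6.1 is stated for the full $HH^\ast(\mathscr A_\theta \rtimes \G)$ and therefore applies to brackets of classes lying in distinct $\gamma$-twisted summands as well.
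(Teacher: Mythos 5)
Your proposal is correct and follows exactly the route the paper takes: the paper's proof of this theorem is the one-line observation that it is a "straightforward consequence of Theorem 5.1 and Lemma 6.1," and your argument simply fills in the details of why the vanishing of the Gerstenhaber bracket forces $d_{\Pi^\G_i}=0$ and hence identifies the Poisson cohomology with the Hochschild cohomology computed in Theorem 5.1. No further comparison is needed.
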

\begin{proof}
The proof is straight forward consequence of Theorem 5.1 and Lemma 6.1.
\end{proof} 
\section{Conclusion}
Although we had the machinery to compute the Gestenhaber brackets explicitly, the brackets are trivial for all the quantum orbifolds.  The vanishing of the $HH^1(\mathscr A_\theta \rtimes \G)$ is the key to isomorphism of the Hochschild cohomology and the Poisson cohomology. The presence of $\gamma$-twisted 2 cocycles in the Poisson cohomology $HH^2_{\Pi}(\mathscr A_\theta \rtimes \G)$ indicates interesting deformation theory for orbifolds. It is well known that for a Poisson algebra $(A, \Pi)$ and a Poisson 2 cocycle $\Pi_1 \in HH^2_{\Pi}(A)$, the group $HH^3_{\Pi}(A)$ is the obstruction for existence of a deformation of $\Pi$ with infinitesimal $\Pi_1$. So to conclude, the formal deformations in the sense of Kontsevich\cite{K} which was studied by Xu\cite{Xu}, Neumaier et al.\cite{NPPT} amongst others does exist in these non-commutative orbifolds along with that which was inherited from the parent non-commutative manifold $\mathcal A_\theta$.\\

\section{Acknowledgement}
I acknowledge the discussion with Prof. Xiang Tang and his valuable comments.

\vspace{2mm}

{\small \noindent{Safdar Quddus},\\ Department of Mathematics,\\ Indian Institute of Science, Bengaluru, India.\\
Email: safdarquddus@iisc.ac.in.

\end{document}